\newcommand{\excise}[1]{}%{$\star$\textsc{#1}$\star$}
\newtheorem{thm}{Theorem}[section]
\newtheorem{lemma}[thm]{Lemma}
\theoremstyle{definition}
\newtheorem{example}[thm]{Example}
\newtheorem{remark}[thm]{Remark}
\newtheorem{notation}[thm]{Notation}
\numberwithin{equation}{section}
\renewcommand\>{\rangle}
\newcommand\<{\langle}
\newcommand\ZZ{\mathbb{Z}}
\DeclareMathOperator\Betti{Betti} % Betti
\begin{document}%%%%%%%%%%%%%%%%%%%%%%%%%%%%%%%%%%%%%%%%%%%%%%%%%%%%%%%%
%%%%%%%%%%%%%%%%%%%%%%%%%%%%%%%%%%%%%%%%%%%%%%%%%%%%%%%%%%%%%%%%%%%%%%%%

\mbox{}
%\vspace{-2ex}%-1.1743pt}
\title[Minimal presentations of shifted affine semigroups]{On minimal presentations of shifted affine semigroups with few generators}

\author[O'Neill]{Christopher O'Neill}
\address{Mathematics Department\\San Diego State University\\San Diego, CA 92182}
\email{cdoneill@sdsu.edu}

\author[White]{Isabel White}
\address{Mathematics Department\\San Diego State University\\San Diego, CA 92182}
\email{iwhite1202@sdsu.edu}

\date{\today}

\begin{abstract}
An affine semigroup is a finitely generated subsemigroup of~$(\ZZ_{\ge 0}^d, +)$, and a numerical semigroup is an affine semigroup with $d = 1$.  A growing body of recent work examines shifted families of numerical semigroups, that is, families of numerical semigroups of the form $M_n = \<n + r_1, \ldots, n + r_k\>$ for fixed $r_1, \ldots, r_k$, with one semigroup for each value of the shift parameter $n$.  
It has been shown that within any shifted family of numerical semigroups, the size of any minimal presentation is bounded (in fact, this size is eventually periodic in $n$).  In this paper, we consider shifted families of affine semigroups, and demonstrate that some, but not all, shifted families of 4-generated affine semigroups have arbitrarily large minimal presentations.  
\end{abstract}

\maketitle

% \setcounter{tocdepth}{1}
% \tableofcontents

%%%%%%%%%%%%%%%%%%%%%%%%%%%%%%%%%%%%%%%%%%%%%%%%%%%%%%%%%%%%%%%%%%%%%%%%%
\section{Introduction}%%%%%%%%%%%%%%%%%%%%%%%%%%%%%%%%%%%%%%%%%%%%%%%%%%%
\label{sec:intro}%%%%%%%%%%%%%%%%%%%%%%%%%%%%%%%%%%%%%%%%%%%%%%%%%%%%%%%%
%raggedbottom%%%%%%%%%%%%%%%%%%%%%%%%%%%%%%%%%%%%%%%%%%%%%%%%%%%%%%%%%%%%

An \emph{affine semigroup} $M$ is a finitely generated subsemigroup of~$(\ZZ_{\ge 0}^d, +)$, and we call~$d$ the \emph{affine dimension} of $M$.  If $d = 1$, we call $M$ a \emph{numerical semigroup}.  We write 
$$M = \<v_1, \ldots, v_k\> = \{a_1v_1 + \cdots + a_kv_k : a_1, \ldots, a_k \in \ZZ_{\ge 0}\} \subset \ZZ_{\ge 0}^d$$
to specify the affine semigroup with \emph{generating set} $v_1, \ldots, v_k \in \ZZ_{\ge 0}^d$.  Each expression 
$$v = a_1v_1 + \cdots + a_kv_k \qquad \text{with} \qquad a_1, \ldots, a_k \in \ZZ_{\ge 0}$$
of an element $v \in M$ as a sum of generators of $M$ is called a \emph{factorization} of $v$, which we often represent by the $k$-tuple $(a_1, \ldots, a_k)$.  

One of the primary ways of studying an affine semigroup $M$ is via a \emph{minimal presentation} $\rho \subset \ZZ_{\ge 0}^k \times \ZZ_{\ge 0}^k$, each element of which is a pair of factorizations that represents a minimal \emph{relation} or \emph{trade} between the generators of $M$ (we defer the formal definition of minimal presentations until Section~\ref{sec:background}).  As an example, if $M = \<6, 9, 20\>$ is the \emph{Chicken McNugget semigroup}~\cite{mcnuggetmag}, then 
$$\rho = \big\{ \! \big( (3,0,0), (0,2,0) \big), \big( (4,4,0), (0,0,3) \big) \! \big\}$$
is one possible minimal presentation for $M$, where the first element represents the relation $3 \cdot 6 = 2 \cdot 9$ and the second represents the relation $4 \cdot 6 + 4 \cdot 9 = 3 \cdot 20$.  

This paper considers parametrized families of affine semigroups of the form
$$M_n = \<f_1(n), \ldots, f_k(n)\> \subset \ZZ_{\ge 0}^d$$
for some functions $f_1(n), \ldots, f_k(n)$ whose coordinates are polynomials in an integer parameter $n$.  A growing body of recent work~\cite{presburgerarith,shiftydelta,shiftedtangentcone,shiftyapery,shifted3gen} examines the asymptotic behavior of various combinatorially-flavored semigroup invariants, viewed as functions of $n$, and numerous invariants have been shown for large $n$ to coincide with a \emph{quasipolynomial}, that is, a polynomial whose coefficients are periodic functions of $n$.  

Recent work of Bogart, Goodrick and Woods~\cite{parametricpresburgerbigthm} prove that for any parametrized family of affine semigroups, the function $n \mapsto |\rho_n|$ is eventually quasipolynomial in~$n$, where each $\rho_n$ denotes a minimal presentation of $M_n$.  Their results are broad, but nonconstructive, relying on techniques from formal logic known as Presburger arithmetic.  In particular, combinatorial details of the quasipolynomial function, such as its degree, period, and (often constant) leading coefficient, are not known in general.  

Parametrized families of numerical semigroups have recieved substantially more attention.  When $M_n$ is a numerical semigroup of the form
$$M_n = \<n + r_1, \ldots, n + r_k\> \subset \ZZ_{\ge 0}$$
for fixed $r_1, \ldots, r_k \in \ZZ_{\ge 0}$ (which we call a \emph{shifted} family of numerical semigroups), it~is known that the function $n \mapsto |\rho_n|$ is eventually periodic~\cite{shiftyminpres,vu14}.  More generally, if each generator instead has the form $w_in + r_i$ for some $w_i, r_i \in \ZZ_{\ge 0}$, then it is again known that $n \mapsto |\rho_n|$ is eventually periodic~\cite{kerstetter}.  

In this paper, we provide an initial investigation into the combinatorial details of the function $n \mapsto |\rho_n|$ for families of affine semigroups.  In particular, we consider affine semigroups of the form 
$$M_n = \<(n, n) + (x_1, y_1), \ldots, (n, n) + (x_k, y_k)\> \subset \ZZ_{\ge 0}^2$$
for fixed $x_i, y_i \in \ZZ_{\ge 0}$ (which we call \emph{shifted affine semigroups}).  We~prove that if $k = 3$, then $|\rho_n|$~is eventually periodic (Theorems~\ref{t:3gen1} and~\ref{t:3gen2}).  In contrast, for $k = 4$, we provide examples demonstrating that $|\rho_n|$ grows unbouned for some shifted families (Theorem~\ref{t:4genlinear}), but is eventually periodic for others (Theorem~\ref{t:4genperiodic}).

%%%%%%%%%%%%%%%%%%%%%%%%%%%%%%%%%%%%%%%%%%%%%%%%%%%%%%%%%%%%%%%%%%%%%%%%%
\section{Background}%%%%%%%%%%%%%%%%%%%%%%%%%%%%%%%%%%%%%%%%%%%%%%%%%%%%%
\label{sec:background}%%%%%%%%%%%%%%%%%%%%%%%%%%%%%%%%%%%%%%%%%%%%%%%%%%%
%raggedbottom%%%%%%%%%%%%%%%%%%%%%%%%%%%%%%%%%%%%%%%%%%%%%%%%%%%%%%%%%%%%

Fix an affine semigroup $M = \<v_1, \ldots, v_k\> \subset \ZZ_{\ge 0}^d$.  In what follows, we develop the notion of a minimal presentation; for a more detailed introduction, see~\cite{fingenmon,numerical}.  

The \emph{factorization homomorphism}
$$\begin{array}{r@{}c@{}l}
\pi:\ZZ_{\ge 0}^k &{}\longrightarrow{}& M \\
z &{}\longmapsto{}& z_1v_1 + \cdots + z_kv_k
\end{array}$$
is the additive semigroup homomorphism that sends each $k$-tuple $z = (z_1, \ldots, z_k)$ to the element of $M$ that $z$ is a factorization of.  Under this notation, the preimage $\pi^{-1}(v)$ is the \emph{set of factorizations} of $v \in M$.  

A \emph{relation} or \emph{trade} of $M$ is a pair $(z, z')$ of factorizations such that $\pi_n(z) = \pi(z')$.  The set $\ker\pi \subset \ZZ_{\ge 0}^k \times \ZZ_{\ge 0}^k$ of relations of $M$, given by
$$\ker\pi = \{(z, z') : \pi(z) = \pi(z')\},$$
is an equivalence relation on $\ZZ_{\ge 0}^k$ that is additionally closed under \emph{translation}:\ whenever $(z, z') \in \ker\pi$, we have $(z + u, z' + u) \in \ker\pi$ for any $u \in \ZZ_{\ge 0}^k$.  
This makes $\ker\pi$ a \emph{congruence} on $\ZZ_{\ge 0}^k$, called the \emph{kernel congruence} of $\pi$.  

A \emph{presentation} for $M$ is a subset $\rho \subset \ker\pi$ such that $\ker\pi$ is the smallest congruence on $\ZZ_{\ge 0}^k$ containing $\rho$ (or, equivalently, if $\ker\pi$ equals the intersection of all congruences on~$\ZZ_{\ge 0}^k$ containing $\rho$).  
We say a presentation $\rho$ of $M$ is \emph{minimal} if no proper subset of~$\rho$ is a presentation for $M$.  Although a given affine semigroup $M$ can have several distinct minimal presentations, it is known that all minimal presentations of $M$ are finite and contain the same number of relations.  

The \emph{Betti elements} of $M$ are those in the set
$$\Betti(M) = \{\pi(z) : (z, z') \in \rho\},$$
where $\rho$ is any minimal presentation of $M$ (it is known that the set $\Betti(M)$ is independent of the choice of $\rho$).  
Given an element $v \in M$, we define a graph $\nabla_v$ whose vertex set is $\pi^{-1}(v)$ and where two vertices $z, z' \in \pi^{-1}(v)$ are connected by an edge whenever $z_i > 0$ and $z_i' > 0$ for some $i$.  It turns out $v \in \Betti(M)$ if and only if $\nabla_v$ is disconnected.  Moreover, the number of relations $(z, z') \in \rho$ for which $v = \pi(z)$ is one less than the number of connected components of $\nabla_v$.  This connection between Betti elements and minimal relations will play a key role in Theorem~\ref{t:4genlinear}.  

\begin{example}\label{e:raisingcanes}
The affine semigroup 
$$C = \<(3,2), (4,3), (6,3)\>$$
is the \emph{Raising Cane's semigroup}, named for the popular southern fried chicken restaurant.  Raising Cane's has the following combos: 
\begin{enumerate}[(i)]
\item 
the 3-Finger Combo, which comes with 3 chicken fingers and 2 sides; 

\item 
the Box Combo, which comes with 4 chicken fingers and 3 sides; and 

\item 
the Caniac Combo, which comes with 6 fingers and 3 sides.  

\end{enumerate}
As such, a vector $(x, y)$ lies in $C$ if you can purchase exactly $x$ chicken fingers and $y$ sides using the combo boxes listed above. 

The Raising Cane's semigroup has minimal presentation
$$\rho = \big\{ \! \big( (6,0,0), (0,3,1) \big) \! \big\}$$
since purchasing $6$ of the $3$-Finger Combos yields the same number of chicken fingers and sides as purchasing $3$ of the Box Combos and $1$ Caniac Combo.  
\end{example}

It turns out that, like the semigroup in Example~\ref{e:raisingcanes}, any 3-generated affine semigroup in $\ZZ_{\ge 0}^2$ has a unique minimal presentation consisting of a single relation, a fact we record here for use in our proofs of Theorems~\ref{t:3gen1} and~\ref{t:3gen2}.  

\begin{lemma}\label{l:3genminpres}
Any 3-generated affine semigroup $S \subset \ZZ_{\ge 0}^2$ has a unique trade $(z, z')$ with $|z| < |z'|$ in which the coordinates of $z$ and $z'$ do not all have a common factor.  Moreover, $\{(z, z')\}$ is the unique minimal presentation of $S$.  
\end{lemma}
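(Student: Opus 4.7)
The plan is to reduce the lemma to the structure of the kernel of the extended factorization map $\wt\pi : \ZZ^3 \to \ZZ^2$ viewed as a homomorphism of abelian groups. Assuming the three generators of $S$ span a rank-$2$ subgroup of $\ZZ^2$ (the case relevant to the paper), $\ker \wt\pi$ has rank exactly $1$. Pick a primitive generator $w \in \ZZ^3$ and write $w = z - z'$ as the difference of its positive and negative parts, so that $z, z' \in \ZZ_{\ge 0}^3$ have disjoint supports, the combined $\gcd$ of their coordinates equals $1$, and after swapping if necessary $|z| < |z'|$. This constructs the desired trade. For uniqueness among trades with coprime coordinates, any such $(y, y') \in \ker \pi$ satisfies $y - y' \in \ker \wt\pi = \ZZ w$, so $y - y' = c w$ for some $c \in \ZZ$; primitivity of $w$ together with the coprimality hypothesis force $|c| = 1$, and the inequality $|y| < |y'|$ selects the sign.

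To show $\{(z, z')\}$ is a presentation, I will verify that the congruence $\<(z, z')\>$ it generates coincides with $\ker \pi$. Given $v \in S$ and two factorizations $y, y'' \in \pi^{-1}(v)$, the difference $y'' - y$ lies in $\ker \wt\pi = \ZZ w$, so $y'' = y + c w$ with (WLOG) $c \ge 0$. For each integer $0 \le c' \le c$, every coordinate of $y + c' w$ is a linear function of $c'$; since it is nonnegative at both $c' = 0$ and $c' = c$, it stays nonnegative throughout, hence $y + c' w \in \pi^{-1}(v)$. The disjoint-supports condition further gives $y + c' w \ge z'$ coordinatewise for $0 \le c' < c$, making the step $y + c' w \to (y + c' w) - z' + z = y + (c'+1) w$ a single application of the trade $(z', z)$. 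Concatenating these moves yields $y \sim y''$ in $\<(z, z')\>$, so $\<(z, z')\> = \ker \pi$, and minimality is automatic for a presentation with a single relation.

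Uniqueness of the minimal presentation then follows because any other 1-element presentation $\{(y, y')\}$ must have $y - y' = \pm w$, forcing $(y, y') = (z + u, z' + u)$ for some $u \in \ZZ_{\ge 0}^3$; if $u \ne 0$, the disjoint supports of $z$ and $z'$ show that neither $z \ge y$ nor $z \ge y'$ holds, so the relation $(y, y')$ cannot be applied to the factorization $z$, and $\{(y, y')\}$ fails to generate $(z, z') \in \ker \pi$. The step I expect to require the most care is verifying that the chain $y, y + w, \ldots, y + cw$ stays in $\ZZ_{\ge 0}^3$; but this reduces to the elementary observation that a monotone linear function on a closed integer interval, nonnegative at both endpoints, is nonnegative throughout.
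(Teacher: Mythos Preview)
The paper's own proof is a one-line citation to \cite{completeintersection}, so your direct argument takes a genuinely different route.  Your proof that $\{(z,z')\}$ is a presentation, via the chain $y, y+w, \ldots, y+cw$ staying inside $\ZZ_{\ge 0}^3$, is correct and clean, as is your argument that no nontrivial translate of $(z,z')$ can serve as a one-element presentation.  Together these establish the ``unique minimal presentation'' clause, which is the part of the lemma actually invoked in Theorems~\ref{t:3gen1} and~\ref{t:3gen2}.

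There is, however, a genuine gap in your argument for uniqueness of the \emph{trade}.  You assert that if $(y,y')$ is a trade whose six coordinates are coprime and $y - y' = cw$, then ``primitivity of $w$ together with the coprimality hypothesis force $|c| = 1$.''  This is false: coprimality of the coordinates of $(y,y')$ does not force $y - y'$ to be primitive, because $y$ and $y'$ need not have disjoint supports.  For example, with $S = \<(1,0),(0,1),(1,1)\>$ one has $w = (1,1,-1)$, and the trade $\big((0,0,3),(2,2,1)\big)$ has $\gcd(0,0,3,2,2,1)=1$ yet $y - y' = -2w$.  (In fact the lemma's literal statement is imprecise on exactly this point; what is true, and what you \emph{do} prove, is that the unique minimal presentation consists of the single pair $(z,z')$ obtained from the positive and negative parts of a primitive generator of $\ker\wt\pi$.)  The easy fix is to restrict the uniqueness claim to trades with disjoint supports: if $\supp(y)\cap\supp(y')=\varnothing$ then each coordinate of $y-y'=cw$ equals $y_i$ or $-y_i'$, so the $\gcd$ of the coordinates of $(y,y')$ equals $|c|\cdot\gcd(w)=|c|$, forcing $|c|=1$ and hence $(y,y')=(z,z')$.
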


\begin{proof}
This follows immediately from \cite[Corollary 1.6]{completeintersection}.
\end{proof}

We close the section with a result that first appeared as \cite[Theorem~4.9]{shiftyminpres}, which implies that within any family of shifted numerical semigroups $M_n$ with corresponding minimal presentations $\rho_n$, the map $n \mapsto |\rho_n|$ is eventuall periodic in $n$.  

\begin{thm}\label{t:numericalshifting}
Fix $r_1, \ldots, r_k \in \ZZ_{\ge 0}$, and let $p = r_k - r_1$.  For each $n \in \ZZ_{\ge 1}$, let
$$M_n = \<n + r_1, \ldots, n + r_k\>,$$
and let $\pi_n:\ZZ_{\ge 0}^k \to M_n$ denote the factorization homomorphism of $M_n$.  Consider the map $\Phi_n:\ker \pi_n \to \ker \pi_{n+p}$ given by 
$$(z, z') \mapsto \begin{cases}
(z + \ell e_k, z' + \ell e_1) & \text{if $|z| < |z'|$;} \\
(z + \ell e_1, z' + \ell e_k) & \text{if $|z| > |z'|$;} \\
(z, z') & \text{if $|z| = |z'|$,}
\end{cases}$$
where $\ell = \big| |z| - |z'| \big|$.  
If $n > p^2$, then $\Phi_n$ sends any minimal presentation of $M_n$ to a minimal presentation of $M_{n+p}$.  
\end{thm}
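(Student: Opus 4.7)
The plan is to organize the proof around three claims: first, that $\Phi_n$ is well-defined as a map $\ker\pi_n \to \ker\pi_{n+p}$; second, that for any minimal presentation $\rho$ of $M_n$ the image $\Phi_n(\rho)$ generates $\ker\pi_{n+p}$ as a congruence; and third, that $\Phi_n(\rho)$ is itself minimal. Together these say exactly that $\Phi_n$ carries minimal presentations to minimal presentations.

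The first claim is a direct computation, case-by-case on the three branches in the definition. For the main case $|z| < |z'|$ with $\ell = |z'|-|z|$, one expands $\pi_{n+p}(z + \ell e_k) - \pi_{n+p}(z' + \ell e_1)$ to $(n+p)(|z|-|z'|) + \sum_i(z_i-z'_i)r_i + \ell(r_k-r_1)$ and simplifies using the identity $\sum_i(z'_i - z_i)r_i = n(|z'|-|z|)$ (equivalent to $\pi_n(z) = \pi_n(z')$) together with $p = r_k - r_1$; the result vanishes cleanly. The case $|z| > |z'|$ is symmetric, and the case $|z|=|z'|$ is immediate since $\pi_{n+p}(z)-\pi_{n+p}(z')$ then equals $(\pi_n(z)-\pi_n(z')) + p(|z|-|z'|) = 0$.

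For the second and third claims, the strategy is to exploit the connection between minimal relations and the connected components of the graphs $\nabla_v$ reviewed in Section~\ref{sec:background}. I would introduce a candidate partial inverse $\Psi:\ker\pi_{n+p} \to \ker\pi_n$ performing the opposite shift (subtracting $\ell$ from the last coordinate of the longer factorization and the first coordinate of the shorter), and attempt to show that $\Phi_n$ induces a bijection between connected components of $\nabla_v$ for $v \in \Betti(M_n)$ and components of a graph $\nabla_{v^*}$ for the corresponding $v^* \in \Betti(M_{n+p})$. Once such a bijection is in hand, both remaining claims follow, since by Section~\ref{sec:background} the number of relations contributed by each Betti element to a minimal presentation is one less than its component count.

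The main obstacle is where the bound $n > p^2$ must enter essentially. It will be used for two purposes: first, to guarantee that every connected component of $\nabla_{v^*}$ contains a representative on which $\Psi$ yields nonnegative coordinates (so the component has a preimage under $\Phi_n$); and second, to rule out Betti elements of $M_{n+p}$ that do not arise from any Betti element of $M_n$. Both reduce to coordinate-wise nonnegativity arguments after a careful accounting of how shifts of size at most $p$ interact with the bulk $n$ term in each coordinate of a factorization. The quadratic threshold $n > p^2$ should emerge as the point at which every shift of magnitude $\le p$ can be reversed without forcing any coordinate negative, making it the critical technical hurdle of the argument.
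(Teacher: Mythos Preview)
The paper does not prove this theorem. Theorem~\ref{t:numericalshifting} is stated in Section~\ref{sec:background} purely as background, with the explicit attribution ``a result that first appeared as \cite[Theorem~4.9]{shiftyminpres}''; no argument is given here. There is therefore nothing in this paper to compare your proposal against.

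That said, your outline is a plausible sketch of how such a proof could go, and the first claim (well-definedness of $\Phi_n$) is handled correctly. The remaining two claims are where all the content lies, and your proposal stays at the level of strategy rather than argument: you correctly identify that the bound $n>p^2$ must enter to guarantee nonnegativity when inverting the shift and to control which Betti elements can appear, but you do not actually carry out either step. In particular, showing that every connected component of every $\nabla_{v^*}$ for $v^*\in\Betti(M_{n+p})$ contains a factorization with $k$th coordinate at least $\ell$ (so that $\Psi$ lands in $\ZZ_{\ge 0}^k$), and that no new Betti elements are created, requires a genuine structural argument about factorizations in $M_{n+p}$ once $n$ is large; this is the heart of the result in \cite{shiftyminpres} and is not something that follows from the graph-component bookkeeping alone. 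If you want to complete the proof you would need to consult that reference or reproduce its key lemmas.
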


%%%%%%%%%%%%%%%%%%%%%%%%%%%%%%%%%%%%%%%%%%%%%%%%%%%%%%%%%%%%%%%%%%%%%%%%%
\section{Shifted affine semigroups with 3 generators}%%%%%%%%%%%%%%%%%%%%
\label{sec:3gen}%%%%%%%%%%%%%%%%%%%%%%%%%%%%%%%%%%%%%%%%%%%%%%%%%%%%%%%%%
%raggedbottom%%%%%%%%%%%%%%%%%%%%%%%%%%%%%%%%%%%%%%%%%%%%%%%%%%%%%%%%%%%%

The main results of this section are Theorems~\ref{t:3gen1} and~\ref{t:3gen2}, which together establish an analogous result to Theorem~\ref{t:numericalshifting} for 3-generated affine semigroups in affine dimension~$2$.  

\begin{notation}\label{n:3gen}
Throughout this section, let $r_1 = (x_1, y_1), r_2 = (x_2, y_2) \in \ZZ_{\ge 0}^2$ be fixed, and for each $n \in \ZZ_{\ge 1}$ define the affine semigroup
$$M_n = \<N, N + (x_1, y_1), N + (x_2, y_2)\>$$
and the corresponding factorization homomorphism $\pi_n:\ZZ_{\ge 0}^3 \to \ZZ_{\ge 0}^2$ given by
$$\pi_n(z_0, z_1, z_2) = z_0N + z_1(N + r_1) + z_2(N + r_2) = |z|N + z_1r_1 + z_2r_2,$$
where $N = (n,n)$.  Additionally, define 
$$
a_1 = |x_1 - y_1|,
\quad
a_2 = |x_2 - y_2|,
\quad
d_1 = \frac{a_1}{\gcd(a_1,a_2)},
\quad
d_2 = \frac{a_2}{\gcd(a_1,a_2)},
$$
and
$$p = \frac{|x_1y_2 - y_1x_2|}{\gcd(a_1,a_2)}.$$
\end{notation}

\begin{remark}\label{r:3gengeometry}
The precise relationship between minimal presentations within the same shifted family is dependent upon the geometric orientation of the generators of $M_n$.  In~particular, there exist two possible orientations of generators of $M_n$.  
\begin{enumerate}[(a)]
\item 
If $x_1 < y_1$ and $x_2 > y_2$ (or if $x_2 < y_2$ and $x_1 > y_1$), then $N + (x_1, y_1)$ and $N + (x_2, y_2)$ can be oriented such that they are on either side of $N$.  These conditions can be consolidated by requiring $(x_1 - y_1)(x_2 - y_2) < 0$. 

\item 
If $x_1 > y_1$ and $x_2 > y_2$ (or if $x_1 < y_1 $ and $x_2 < y_2$), then $N + (x_1, y_1)$ and $N + (x_2, y_2)$ can be oriented such that they are on one side of $N$.  As before, these conditions can be consolidated to $(x_1 - y_1)(x_2 - y_2) > 0$.  

\end{enumerate}
Theorems~\ref{t:3gen1} and~\ref{t:3gen2}, respectively, handle these two cases.  
\end{remark}

\begin{thm}\label{t:3gen1}
Suppose $(x_1-y_1)(x_2 - y_2) \le 0$, and let $\Psi_n: \ker{\pi_n} \rightarrow \ker{\pi}_{n+p}$ given~by 
\begin{center}
$\Psi_n(z,z') = \begin{cases}
\big( z + \ell(d_2e_1 + d_1e_2), z' + \ell(d_1+d_2)e_0 \big) & \text{if $|z| < |z'|$;} \\
\big( z + \ell(d_1+d_2)e_0, z' + \ell(d_2e_1 + d_1e_2) \big) & \text{if $|z| > |z'|$;} \\
(z,z') & \text{if $|z| = |z'|$,}
\end{cases}$
\end{center}
where $\ell = \big| |z| - |z'| \big|$.  If $\{(z,z')\}$ is the minimal presentation of $M_n$, then $\{\Psi_n(z,z')\}$ is the minimal presentation of $M_{n+p}$.  
\end{thm}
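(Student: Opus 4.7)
The plan is to invoke Lemma~\ref{l:3genminpres}, which provides that each of $M_n$ and $M_{n+p}$ has a unique minimal presentation consisting of a single trade, namely the unique trade (with $|z|<|z'|$) whose six coordinates have gcd equal to~$1$; thus it suffices to derive a closed-form expression for the minimal trade of $M_n$ in terms of $n$, $d_1$, $d_2$, $p$, apply $\Psi_n$, and compare with the corresponding expression for $M_{n+p}$. The hypothesis $(x_1-y_1)(x_2-y_2)\le 0$ enters through the identity
$$d_2 r_1 + d_1 r_2 = p \cdot (1,1),$$
which I would establish first by a short case analysis on the signs of $x_i - y_i$, unwinding the absolute values in the definitions of $a_1$, $a_2$, and $p$.

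Next, I would regard $\pi_n$ as a $\ZZ$-linear map $\ZZ^3 \to \ZZ^2$; its kernel is a rank-one sublattice of $\ZZ^3$ whose primitive generator $\xi_n$ has positive and negative parts equal to $z$ and $z'$, respectively. Writing $\xi_n = (\alpha_0,\alpha_1,\alpha_2)$ and subtracting the two coordinate equations of $\xi_n\in\ker\pi_n$ yields $\alpha_1(y_1-x_1)=\alpha_2(x_2-y_2)$ (up to signs fixed by the hypothesis), forcing $(\alpha_1,\alpha_2)$ to be proportional to $(d_2,d_1)$; imposing integrality of $\alpha_0$ via the displayed identity then pins down the proportionality factor as $n/\gcd(n,p)$, which (using $\gcd(d_1,d_2)=1$) is also the unique scale making $\xi_n$ primitive. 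Setting $h=\gcd(n,p)$, this gives
$$z = \bigl(0,\ nd_2/h,\ nd_1/h\bigr), \qquad z' = \bigl((n(d_1+d_2)+p)/h,\ 0,\ 0\bigr),$$
and in particular $|z|<|z'|$ with $\ell = |z'|-|z| = p/h$.

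Substituting into the $|z|<|z'|$ case of the formula for $\Psi_n$ and simplifying then yields
$$\Psi_n(z,z') = \bigl((0,\ (n+p)d_2/h,\ (n+p)d_1/h),\ (((n+p)(d_1+d_2)+p)/h,\ 0,\ 0)\bigr).$$
Because $\gcd(n+p,p)=\gcd(n,p)=h$, this is precisely the expression for the minimal trade of $M_{n+p}$ obtained by substituting $n\mapsto n+p$ in the previous display, so by Lemma~\ref{l:3genminpres} it is the unique minimal presentation of $M_{n+p}$.

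The main obstacle I expect is pinning down the scaling factor $n/\gcd(n,p)$ in $\xi_n$: a smaller scale violates integrality of the $e_0$-coordinate, while a larger scale violates primitivity. The chain of coincidences $\gcd(nd_1,nd_2,n(d_1+d_2)+p) = \gcd(n,p) = \gcd(n+p,p)$ is what ultimately makes $\Psi_n$ align the two minimal trades, and the fully degenerate subcases (such as $x_1=y_1$) should go through with the same formulas without additional work.
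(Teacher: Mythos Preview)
Your proof is correct, but takes a genuinely different route from the paper's.

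The paper never writes down the minimal trade of $M_n$ explicitly.  Instead it argues in two steps: first a direct calculation (essentially your identity $d_2r_1+d_1r_2=p\cdot(1,1)$, with WLOG sign choices) shows $\Psi_n$ carries $\ker\pi_n$ into $\ker\pi_{n+p}$; second, a short divisibility trick shows $\Psi_n$ preserves the ``coordinates have $\gcd$~1'' property --- any common divisor $k$ of the six coordinates of $(w,w')=\Psi_n(z,z')$ divides $|w|$ and $|w'|$, hence $\ell=\big||w|-|w'|\big|$, hence every coordinate of $(z,z')=(w-\ell(d_2e_1+d_1e_2),\,w'-\ell(d_1+d_2)e_0)$.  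Lemma~\ref{l:3genminpres} then finishes.

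Your approach instead computes the primitive generator $\xi_n$ of the rank-one lattice $\ker\pi_n\subset\ZZ^3$ in closed form, reads off the minimal trade as its positive/negative split, and checks that $\Psi_n$ sends it to the analogous expression for $M_{n+p}$ via $\gcd(n+p,p)=\gcd(n,p)$.  This is more work up front but buys something the paper's argument does not: an explicit formula for the minimal presentation of every $M_n$, from which the periodicity in $n$ (with period dividing $p$) is transparent.  The paper's argument, by contrast, is slicker and more conceptual --- it never needs to know what the trade actually is, only that $\Psi_n$ preserves primitivity --- and it establishes well-definedness of $\Psi_n$ on all of $\ker\pi_n$, not just on the single minimal trade.

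One small point: your closing remark that degenerate cases like $x_1=y_1$ go through ``without additional work'' is correct for $a_1=0\ne a_2$ (your formulas specialize with $d_1=0$, $d_2=1$), but the fully degenerate case $a_1=a_2=0$ is implicitly excluded by Notation~\ref{n:3gen} (division by $\gcd(a_1,a_2)$), so you need not worry about it.
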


\begin{proof}
By symmetry, we can assume without loss of generality that $a_1 = y_1 - x_1 \ge 0$ and that $a_2 = x_2 - y_2 \ge 0$.  As such, 
$$p = \frac{x_2y_1 - x_1y_2}{\gcd(a_1, a_2)}$$
and for any $(z, z') \in \ker \pi_n$,
$$\Psi_n(z,z') = \big( z + \ell(d_2e_1 + d_1e_2), z' + \ell(d_1+d_2)e_0 \big).$$
We must first show $\Psi_n$ is well-defined, that is, whenever $(z,z') \in \ker \pi_n$, we have $(w, w') := \Psi_n(z,z') \in \ker \pi_{n+p}$.  By symmetry, it suffices to consider the case $|z| \le |z'|$.  By assumption, $\pi_n(z) = \pi_n(z')$, so
\begin{align*}
w - w'
&= \pi_{n+p} \big( z + \ell(d_2e_1 + d_2e_2)) - \pi_{n+p}(z' + \ell(d_1+d_2)e_0 \big) \\
&=  \big( (|z| + \ell(d_1+d_2))(n+p, n+p) + (z_1+ \ell d_2)r_1 + (z_2 + \ell d_1)r_2 \big) \\
& \qquad - \big( (|z'| + \ell(d_1+d_2))(n+p, n+p) + z_1'r_1 + z_2'r_2 \big) \\
&=  (|z| - |z'|)(n+p, n+p) + (z_1 - z_1' + \ell d_2)r_1 + (z_2 - z_2' + \ell d_1)r_2 \\
&= \pi_n(z) - \pi_n(z') + \ell(p, p) + \ell(d_2r_1 - d_1r_2) \\
&= \ell (p, p) +  \ell \bigg( \! \dfrac{(x_2-y_2)x_1}{\gcd(a_1,a_2)} - \dfrac{(y_1 - x_1)x_2}{\gcd(a_1,a_2)}, \dfrac{(x_2-y_2)y_1}{\gcd(a_1,a_2)} - \dfrac{(y_1-x_1)y_2}{\gcd(a_1,a_2)} \! \bigg) \\
&= \ell (p, p) - \ell \bigg( \! \dfrac{x_2y_1 - y_2x_1}{\gcd(a_1,a_2)}, \dfrac{x_2y_1 - y_2x_1}{\gcd(a_1,a_2)} \! \bigg) = (0,0),
\end{align*} 
thereby proving $\Psi_n$ is well-defined.  

By Lemma~\ref{l:3genminpres}, it remains to show that if the coordinates of $(z,z')$ have no common factors, then the coordinates of $(w, w') = \Psi_n(z,z')$ has no common factors.  To that end, suppose $k \in \ZZ_{\ge 1}$ divides every coordinate in both $w$ and $w'$.  We must show that $k$ divides every coordinate of $z$ and $z'$.  This means $k \mid z_i$ for $i = 0, 1, 2$, meaning $k$ divides $|w| = w_0 + w_1 + w_2$.  Likewise, $k$ divides $|w'|$, and therefore $k$ divides 
$$\ell = \big| |w| - |w'| \big| = \big| |z| - |z'| \big|.$$
As such, $k$ divides every coordinate of $z = w - \ell(d_2e_1 + d_1e_2)$ as well as every coordinate of $z' = w' - \ell(d_1 + d_2)e_0$.  This completes the proof.  
\end{proof}

The proof of Theorem~\ref{t:3gen2} is analogous to that of Theorem~\ref{t:3gen1}, and thus is omitted.  

\begin{thm}\label{t:3gen2}
If $(x_1 - y_1)(x_2 - y_2) > 0$, then the map $\Psi_n': \ker{\pi_n} \rightarrow \ker{\pi}_{n+p}$ given by 
\begin{center}
$\Psi'_n(z,z') = \begin{cases}
\big( z + \ell d_1e_2, z' + \ell(|d_1-d_2|e_0 + d_2e_1) \big) & \text{if $|z| < |z'|$} \\
\big( z + \ell(|d_1-d_2|e_0 + d_2e_1), z' + \ell d_1e_2 \big) & \text{if $|z| > |z'|$} \\
(z,z') & \text{if $|z| = |z'|$ }
\end{cases}$
\end{center}
where $\ell = \big| |z| - |z'| \big|$.  If $\{(z,z')\}$ is the minimal presentation of $M_n$, then $\{\Psi'_n(z,z')\}$ is the minimal presentation of $M_{n+p}$.  
\end{thm}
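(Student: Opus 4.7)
The plan is to mirror the proof of Theorem~\ref{t:3gen1} essentially step for step, with the combinatorics of the formula $\Psi'_n$ replacing that of $\Psi_n$. By Lemma~\ref{l:3genminpres}, a $3$-generated affine semigroup in $\ZZ_{\ge 0}^2$ has a unique minimal presentation, consisting of a single relation whose coordinates share no common divisor. Thus it suffices to verify two claims about $\Psi'_n$: (i) $\Psi'_n(\ker\pi_n) \subseteq \ker\pi_{n+p}$, and (ii) if the coordinates of $(z,z')$ have no common factor, neither do those of $\Psi'_n(z,z')$. The hypothesis $(x_1-y_1)(x_2-y_2)>0$ places both shifts $r_1,r_2$ on the same side of the diagonal, and by reflecting coordinates and swapping the labels $r_1 \leftrightarrow r_2$ as needed, we may assume $a_1 = x_1 - y_1 \ge a_2 = x_2 - y_2 > 0$.

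The algebraic heart of the argument is the $\ZZ^2$-identity $d_1 r_2 - d_2 r_1 = \pm(p,p)$. This holds because $d_1 a_2 - d_2 a_1 = 0$ (by definition of $d_1,d_2$) forces the two coordinates of $d_1 r_2 - d_2 r_1$ to coincide, and their common value $c$ satisfies $|c|=p$. The sign is pinned down by the sign of $x_1 y_2 - x_2 y_1$, and a further WLOG (absorbed into the labeling of $r_1, r_2$) selects a convenient convention.

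For (i), fix $(z,z') \in \ker\pi_n$ with $|z| \le |z'|$ and set $(w,w') = \Psi'_n(z,z')$. A direct count using the WLOG $d_1 \ge d_2$ shows $|w| - |w'| = |z| - |z'| = -\ell$; the $|d_1 - d_2|e_0$ correction in the formula is precisely calibrated so that the two factorization lengths shift by the same amount. Expanding $\pi_{n+p}(w) - \pi_{n+p}(w')$ and using $\pi_n(z) = \pi_n(z')$ to rewrite $(z_1 - z_1')r_1 + (z_2 - z_2')r_2 = \ell(n,n)$, the expression telescopes to $-\ell(p,p) + \ell(d_1 r_2 - d_2 r_1)$, which vanishes by the $\ZZ^2$-identity.

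For (ii), suppose $k \in \ZZ_{\ge 1}$ divides every coordinate of both $w$ and $w'$. Then $k$ divides $|w|$ and $|w'|$, hence $\ell = \big||w| - |w'|\big|$, and subtracting the explicit correction vectors shows that $k$ divides every coordinate of $z$ and $z'$, forcing $k = 1$ by minimality. Lemma~\ref{l:3genminpres}, applied in $M_{n+p}$, then identifies $\{(w,w')\}$ as the unique minimal presentation of $M_{n+p}$. The main obstacle is the simultaneous sign- and ordering-bookkeeping in the WLOG at the start: the sign of $d_1 r_2 - d_2 r_1$ relative to $(p,p)$ and the ordering of $d_1$ versus $d_2$ are both toggled by swapping labels, so aligning them to the single configuration handled by the stated formula requires careful use of the available reflection and label-swap symmetries; after that reduction is in place, the remainder of the proof is a mechanical adaptation of Theorem~\ref{t:3gen1}.
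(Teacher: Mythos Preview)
Your proposal takes essentially the same approach as the paper: the paper omits the proof of Theorem~\ref{t:3gen2} entirely, stating only that it ``is analogous to that of Theorem~\ref{t:3gen1}, and thus is omitted,'' and your plan mirrors that proof step for step---the well-definedness computation via the identity $d_1 r_2 - d_2 r_1 = \pm(p,p)$, the no-common-factor argument via divisibility of $\ell = \big||w|-|w'|\big|$, and the appeal to Lemma~\ref{l:3genminpres}. Your explicit flagging of the sign- and ordering-bookkeeping in the WLOG reduction is, if anything, more careful than what the paper provides.
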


%%%%%%%%%%%%%%%%%%%%%%%%%%%%%%%%%%%%%%%%%%%%%%%%%%%%%%%%%%%%%%%%%%%%%%%%%
\section{A shifted family with an unbounded number of relations}%%%%%%%%%
\label{sec:4genlinear}%%%%%%%%%%%%%%%%%%%%%%%%%%%%%%%%%%%%%%%%%%%%%%%%%%%
%raggedbottom%%%%%%%%%%%%%%%%%%%%%%%%%%%%%%%%%%%%%%%%%%%%%%%%%%%%%%%%%%%%

In this section, we identify a shifted family of affine semigroups $M_n$, each having 4 generators, for which the number of Betti elements (and, therefore, the size of any minimal presentation) grows unbounded as $n \to \infty$ (Theorem~\ref{t:4genlinear}).  

\begin{notation}\label{n:4genlinear}
For the remainder of this section, let 
$$M_n = \<N, N + (1,3), N + (2,1), N + (2,4)\>,$$
and let 
\begin{align*}
A &= (48k + 18, 48k + 28), \qquad B = (18k^2 + 27k + 4, 18k^2 + 27k+4), \qquad \text{and} \\
R_i &= (36k^2 - 18ik + 24k - 9i +3, 36k^2 - 18ik + 36k - 21i +5),
\end{align*}
where $0 \le i \le k$.  
\end{notation}

\begin{thm}\label{t:4genlinear}
If $n = 6k + 1$ with $k \ge 5$, then we have $\Betti(M_n) \supseteq \{A, B, R_0, \dots R_k\}$.  In~particular, $|\Betti(M_n)|$ is unbounded for $n$ large. 
\end{thm}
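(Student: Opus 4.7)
The plan is to invoke the graph-theoretic characterization from Section~\ref{sec:background}: an element $v$ lies in $\Betti(M_n)$ precisely when $\nabla_v$ is disconnected.  For each element of $\{A, B, R_0, \ldots, R_k\}$, I will exhibit two explicit factorizations whose supports lie in disjoint subsets of $\{v_0, v_1, v_2, v_3\}$, and argue these are the only factorizations of that element, so that $\nabla_v$ is an edgeless graph on two vertices and hence disconnected.

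The factorizations I would use, each verified by direct substitution into $\pi_n(z) = |z|(n,n) + z_1(1,3) + z_2(2,1) + z_3(2,4)$ with $n = 6k+1$, are as follows.  For $A$: the pair $(3,0,0,5)$ and $(0,6,2,0)$, with supports $\{v_0, v_3\}$ and $\{v_1, v_2\}$.  For $B$: the pair $(3k+4, 0, 0, 0)$ and $(0, 5, 2k+2, k-4)$, with supports $\{v_0\}$ and $\{v_1, v_2, v_3\}$; here the second factorization is valid precisely because $k \ge 5$.  For each $0 \le i \le k$: the pair $(3i+1, 6(k-i)+1, 0, 0)$ and $(0, 0, 2i, 6k-5i+1)$ for $R_i$, with supports contained in $\{v_0, v_1\}$ and $\{v_2, v_3\}$ respectively (non-negativity uses $i \le k \le (6k+1)/5$).

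To verify these are the only factorizations, I~would parameterize the kernel of the factorization homomorphism $\pi_n: \ZZ^4 \to \ZZ^2$.  A short calculation shows this kernel is a rank-$2$ sublattice of $\ZZ^4$ generated by $\alpha = (-3, 6, 2, -5)$ and $\beta = (-1, -(6k+1), 0, 6k+1)$.  Every factorization of $R_i$ is then obtained from $(3i+1, 6(k-i)+1, 0, 0)$ by adding an integer combination $s\alpha + t\beta$ and demanding componentwise non-negativity.  A case analysis on the signs of $s$ and $t$, using the hypothesis $k \ge 5$ to bound the feasible region, shows that only $(s, t) = (0, 0)$ and $(s, t) = (i, 1)$ survive; the latter yields exactly $(0, 0, 2i, 6k - 5i + 1)$.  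Analogous but simpler analyses handle $A$ and $B$, where for $A$ one first observes that $|z| = 8$ is the only feasible size.

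Finally, the $k+3$ elements in $\{A, B, R_0, \ldots, R_k\}$ are pairwise distinct: the first coordinate of $R_i$ is strictly monotone in $i$; the two coordinates of $B$ are equal while those of each $R_i$ differ by $12(k-i) + 2 > 0$; and $A$ has coordinates linear in $k$ whereas those of $B$ and the $R_i$ are quadratic in $k$.  Therefore $|\Betti(M_n)| \ge k + 3$, which grows unbounded as $k \to \infty$.  The principal technical obstacle is the kernel feasibility analysis for $R_i$: one must rule out, uniformly in $i$ and $k$, any ``bridging'' factorization using generators from both halves of the partition, and tracking the constraints carefully is where the work lies.
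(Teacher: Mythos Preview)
Your overall strategy matches the paper's: exhibit two factorizations of each element with disjoint supports, then argue no other factorizations exist (so each $\nabla_v$ is a two-vertex edgeless graph).  The explicit factorizations you write down for $A$, $B$, and $R_i$ are exactly those the paper uses, and your distinctness check for the $k+3$ elements is something the paper leaves implicit but you spell out.

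Where you diverge is in the verification that no further factorizations exist.  The paper packages this into three lemmas (Lemmas~\ref{l:4gena}, \ref{l:4genb}, \ref{l:4genc}), each of which first bounds the total length $m = a+b+c+d$ of a factorization by manipulating the two coordinate equations directly, and then for each surviving value of $m$ deduces which coordinates must vanish.  In particular, for $B$ and $R_i$ the paper does \emph{not} claim there are exactly two factorizations; it only shows every factorization has support contained in one of two disjoint generator sets, which already forces $\nabla$ to be disconnected.  Your kernel-lattice parameterization via $\alpha = (-3,6,2,-5)$ and $\beta = (-1,-(6k+1),0,6k+1)$ is a correct alternative (these do generate the full rank-$2$ kernel, as a determinant computation confirms) and yields the stronger conclusion that each element has precisely two factorizations.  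The tradeoff: the paper's $m$-bounding argument is elementary and avoids having to verify that $\alpha,\beta$ span the kernel, while your lattice approach is more systematic and would generalize more readily to other shifted families once a kernel basis is in hand.  The feasibility analysis you flag as the ``principal technical obstacle'' is real but routine, and is exactly what the paper's lemmas carry out in the $m$-bounding language.
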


The proof of Theorem~\ref{t:4genlinear} utilizes the following lemmas.  

\begin{lemma}\label{l:4gena}
If $k \ge 5$, then $\mathsf Z(A) = \{(0,6,2,0), \, (3,0,0,5)\}$.  
\end{lemma}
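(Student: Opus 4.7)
The plan is to directly solve the linear system that defines $\mathsf Z(A)$. Writing $z=(z_0,z_1,z_2,z_3)$ and setting $m=|z|$, the equation $\pi_n(z)=A$ (with $n=6k+1$) yields the two scalar equations
\begin{align*}
m(6k+1) + z_1 + 2z_2 + 2z_3 &= 48k + 18, \\
m(6k+1) + 3z_1 + z_2 + 4z_3 &= 48k + 28.
\end{align*}
I will rewrite the first as $z_1+2z_2+2z_3 = 6k(8-m) + 18 - m$ to expose the dependence on $m$ and $k$.

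The first step is to force $m=8$. If $m\ge 9$, the right-hand side is at most $-6k+9<0$, contradicting the nonnegativity of the left. If $m\le 7$, then since $z_1+z_2+z_3\le m$ we have $z_1+2z_2+2z_3\le 2m\le 14$, whereas the right-hand side is at least $6k+11\ge 41$ when $k\ge 5$. Both cases fail, so $m=8$. This is precisely where the hypothesis $k\ge 5$ enters the argument.

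Once $m=8$, the two equations become $z_1+2z_2+2z_3=10$ and $3z_1+z_2+4z_3=20$. Doubling the first and subtracting the second gives $z_1=3z_2$, and substituting back yields $5z_2+2z_3=10$. The only nonnegative integer solutions are $(z_2,z_3)\in\{(0,5),(2,0)\}$, producing $(z_0,z_1,z_2,z_3)=(3,0,0,5)$ and $(0,6,2,0)$ respectively, both with $z_0=8-z_1-z_2-z_3\ge 0$.

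The main obstacle is purely bookkeeping: the $m=7$ subcase is the binding constraint that necessitates $k\ge 5$, while every other $m$ is ruled out by a comfortable margin and the residual enumeration at $m=8$ is elementary. No structural input beyond writing down the two coordinate equations of $\pi_n(z)=A$ is required.
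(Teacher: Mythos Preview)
Your proof is correct and follows essentially the same approach as the paper: both derive the two coordinate equations, force $m=8$ by bounding the left- and right-hand sides, and then reduce to a finite enumeration independent of $k$. The one difference is that the paper outsources the final $m=8$ enumeration to a computer check at $k=5$, whereas you solve $z_1+2z_2+2z_3=10$, $3z_1+z_2+4z_3=20$ by hand via $z_1=3z_2$ and $5z_2+2z_3=10$; this makes your argument self-contained. (A minor aside: your bounds actually go through for $k\ge 2$, so the remark that $k\ge 5$ is ``precisely'' needed in the $m\le 7$ case overstates things, but this does not affect the validity of the proof under the stated hypothesis.)
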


\begin{proof}
Suppose $(a,b,c,d) \in \mathsf Z(A)$ and let $m = a + b + c + d$.  We first claim $m = 8$.  Examining coordinates, we see
\begin{align}
\label{eq:a1}
48k + 18 &= 6k(a + b + c + d) + (a + 2b + 3c + 3d) = 6mk + (a + 2b + 3c + 3d), \\
\label{eq:a2}
48k + 28 &= 6k(a + b + c + d) + (a + 4b + 5c + 2d) = 6mk + (a + 4b + 2c + 5d),
\end{align}
the first of which we can rearrange to obtain
$$a + 2b + 3c + 3d = 6k(8 - m) + 18.$$
The left side is clearly non-negative, and since $k \ge 5$, the right side is only non-negative if $m \le 8$.  Moreover, if $m \le 7$, then the left side is at most $21$, while the right hand side is at least $48$.  As such, we conclude $m = 8$.  Substituting back into~\eqref{eq:a1} and~\eqref{eq:a2}, we see $(a,b,c,d) \in \mathsf Z(A)$ if and only if
$$18 = a + 2b + 3c + 3d \qquad \text{and} \qquad 28 = a + 4b + 5c + 2d,$$
which are both independent of $k$.  A quick computation for $k = 5$ with \cite{numericalsgpsgap} then yields
$$\mathsf Z(A) = \{(0,6,2,0), \, (3,0,0,5)\}$$
for every $k \ge 5$.  
\end{proof}

\begin{lemma}\label{l:4genb}
Fix $(a,b,c,d) \in \mathsf Z(B)$ and let $m = a + b + c + d$.  If $k \geq 1$, then exactly one of the following hold: 
\begin{enumerate}[(i)]
\item $m = 3k + 3$ and $a = 0$; or 
\item $m = 3k + 4$ and $b = c = d = 0$.  
\end{enumerate}
\end{lemma}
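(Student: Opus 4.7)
The plan is to follow the structure of the proof of Lemma~\ref{l:4gena}: extract the two coordinate equations of $\pi_n(a,b,c,d) = B$, eliminate one of the free variables via subtraction, and then pin $m$ down to a two-element set using matching upper and lower bounds. Writing $N = (6k+1,\, 6k+1)$ and equating coordinates produces two linear equations whose difference simplifies to $2b - c + 2d = 0$, i.e.\ $c = 2(b+d)$. Substituting this relation back into either coordinate equation yields the single reduced Diophantine equation
$$m(6k+1) + 5b + 6d = 18k^2 + 27k + 4. \qquad (\star)$$

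For the upper bound, the exact identity $(3k+4)(6k+1) = 18k^2+27k+4$ together with $5b+6d \ge 0$ immediately forces $m \le 3k+4$ in $(\star)$, with equality iff $b = d = 0$; since then $c = 2(b+d) = 0$ and $a = m$, this is case~(ii). For the lower bound, I will use that $c = 2(b+d)$ forces $m = a + 3(b+d)$, so that $b+d = (m-a)/3$ and hence
$$5b + 6d \le 6(b+d) = 2(m-a).$$
Plugging this into $(\star)$ gives $m(6k+3) + 2a \ge 18k^2 + 27k + 4$, and since $(3k+2)(6k+3) = 18k^2 + 21k + 6 < 18k^2 + 27k + 4$ for every $k \ge 1$, any value $m \le 3k+2$ is excluded.

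It then remains to treat $m = 3k+3$. Substituting into $(\star)$ gives $5b + 6d = 6k+1$, while $m = a + 3(b+d)$ gives $a = 3k + 3 - 3(b+d)$. If $a > 0$, then $b+d \le k$, so $5b + 6d \le 6k$, contradicting $5b+6d = 6k+1$; hence $a = 0$, which is case~(i). Mutual exclusivity of the two cases is automatic from $3k+3 \ne 3k+4$.

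I do not anticipate a single dramatic obstacle: the whole argument is essentially linear arithmetic once the reduction $c = 2(b+d)$ is observed. The step most deserving of care is the lower bound $m \ge 3k+3$, because it must hold for \emph{every} $k \ge 1$ rather than just for $k$ large (as sufficed in Lemma~\ref{l:4gena}); this is precisely why one must use the sharper inequality $5b+6d \le 6(b+d) = 2(m-a)$ rather than the cruder $5b + 6d \le 2m$.
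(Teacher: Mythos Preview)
Your argument is correct in substance and follows essentially the same plan as the paper: set up the two coordinate equations, subtract to obtain $c = 2(b+d)$, squeeze $m$ into the interval $[3k+3,\,3k+4]$, and dispatch the two surviving values separately. Your treatment of the case $m = 3k+3$ is in fact slightly cleaner than the paper's (which instead derives $2a + b = 5$ and then argues $b \equiv 5 \pmod 6$).

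There is, however, one sign slip you should fix in the lower-bound step. Plugging $5b + 6d \le 2(m-a)$ into $(\star)$ yields
\[
18k^2 + 27k + 4 = m(6k+1) + 5b + 6d \le m(6k+1) + 2m - 2a = m(6k+3) - 2a,
\]
i.e.\ $m(6k+3) - 2a \ge 18k^2 + 27k + 4$, not $m(6k+3) + 2a \ge 18k^2 + 27k + 4$ as you wrote. The inequality you stated is too weak to rule out $m \le 3k+2$ (large $a$ could compensate), but with the correct sign one gets $m(6k+3) \ge 18k^2 + 27k + 4$ immediately from $a \ge 0$, and then your comparison with $(3k+2)(6k+3) = 18k^2 + 21k + 6$ finishes the job exactly as you intended.
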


\begin{proof}
By looking at coordinates,
\begin{align}
\label{eq:b1}
18k^2 + 27k + 4 &= 6km + a + 2b + 3c + 3d \qquad \text{and} \\
\label{eq:b2}
18k^2 + 27k + 4 &= 6km + a + 4b + 2c + 5d,
\end{align}
the first of which we can rearrange to obtain
$$a + 2b + 3c + 3d - 4 = 3k(6k + 9 - 2m).$$
The left hand side is clearly non-negative, while the right hand side is only non-negative if $m \le 3k + 4$.  Moreover, \eqref{eq:b1} implies
$$(6k + 3)(3k + 3) - 5 = 18k^2 + 27k + 4 = 6km + a + 2b + 3c + 3d \le 6km + 3m = (6k + 3)m,$$
which is only possible if $m \ge 3k + 3$ since $k \ge 5$.  
Hence, either $m = 3k + 3$ or $m = 3k + 4$. 

Now, supposing $m = 3k+3$, we must show $a = 0$.  Subtracting into~\eqref{eq:b1} and~\eqref{eq:b2}, we obtain $c = 2b + 2d$, and substituting into~\eqref{eq:b1} yields
$$a + 8b + 9d = 9k + 4 = 3m - 5 = 3a + 3b + 3c + 3d - 5 = 3a + 9b + 9d - 5,$$
meaning $5 = 2a + b$.  This forces $a \le 2$, and reducing both sides of
$$6k + 1 = (9k + 4) - (3k + 3) = 5b + 6d$$
modulo $6$ implies $b \equiv 5 \bmod 6$, so $a = 0$.  On the other hand, supposing $m = 3k + 4$, we must show $b = c = d = 0$.  From~\eqref{eq:b1}, we obtain
$$a + 2b + 3c + 3d = 3k + 4 = a + b + c + d.$$
So, $b + 2c + 2d = 0$, at which point non-negativity implies $b = c = d = 0$.
\end{proof}

\begin{lemma}\label{l:4genc}
Suppose $0 \le i \le k$, fix $(a,b,c,d) \in \mathsf Z(R_i)$, and let $m = a + b + c + d$. If $k \geq 2$, then exactly one of the following hold: 
\begin{enumerate}[(i)]
\item $m = 6k - 3i + 1$ and $a = b = 0$; or
\item $m = 6k - 3i + 2$ and $c = d = 0$.
\end{enumerate}
\end{lemma}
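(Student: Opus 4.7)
My plan is to mirror the proof of Lemma~\ref{l:4genb} as closely as possible. Examining the two coordinates of any $(a,b,c,d) \in \mathsf{Z}(R_i)$, with $m = a+b+c+d$, yields
\begin{align*}
36k^2 - 18ik + 24k - 9i + 3 &= 6km + a + 2b + 3c + 3d, \\
36k^2 - 18ik + 36k - 21i + 5 &= 6km + a + 4b + 2c + 5d.
\end{align*}
Subtracting the first equation from the second gives the auxiliary identity $2b - c + 2d = 12k - 12i + 2$, which will play the analogous role to the identity $c = 2b + 2d$ in Lemma~\ref{l:4genb}.

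To pin down $m$, I would bound the left-hand side of the first equation above and below by multiples of $m$. The inequality $a + 2b + 3c + 3d \le 3m$ gives $(6k+3)m \ge 36k^2 - 18ik + 24k - 9i + 3$, and since a routine polynomial division shows that the right-hand side factors exactly as $(6k+3)(6k - 3i + 1)$, we obtain $m \ge 6k - 3i + 1$. The opposite inequality $a + 2b + 3c + 3d \ge m$ gives $(6k+1)m \le (6k+1)(6k - 3i + 3) - 6i$; for $i \ge 1$ and $k \ge 2$ the correction $6i$ is strictly positive but smaller than $6k + 1$, which forces $m \le 6k - 3i + 2$. The boundary case $i = 0$ is the only genuine obstacle, since there this upper bound only yields $m \le 6k + 3$. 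I would handle this separately by substituting $m = 6k + 3$ into the first equation, which forces $b = c = d = 0$ and $a = 6k + 3$, and noting that this is then incompatible with the second equation whenever $k \ge 2$.

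With $m \in \{6k - 3i + 1,\, 6k - 3i + 2\}$ in hand, I would substitute each candidate value into the first equation to express $b + 2c + 2d$ as a known constant, combine with the auxiliary identity $2b - c + 2d = 12k - 12i + 2$ to eliminate a variable, and use the nonnegativity of $(a,b,c,d)$ to finish. For $m = 6k - 3i + 1$ this system reduces to $b = 3c - 6i$ together with $5c + 2d = 12k + 2$; parity forces $c$ to be even, and the constraints $a \ge 0$ and $b \ge 0$ squeeze out the unique solution $(0, 0, 2i, 6k - 5i + 1)$, so $a = b = 0$. For $m = 6k - 3i + 2$ the analogous reduction yields $5c + 2d = 0$ outright, forcing $c = d = 0$ and leaving the unique solution $(3i + 1, 6k - 6i + 1, 0, 0)$. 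Since the two cases prescribe distinct values of $m$, exactly one of them holds, completing the proof. The only real obstacle throughout is the $i = 0$ edge case above; everything else is routine linear algebra plus nonnegativity bookkeeping, closely paralleling Lemma~\ref{l:4genb}.
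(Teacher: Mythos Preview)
Your proof is correct and follows essentially the same strategy as the paper: write the two coordinate equations, pin $m$ down to a short range of consecutive integers, and dispatch each candidate by linear algebra and nonnegativity. The paper's bounding is slightly looser (it leaves $m = 6k - 3i + 3$ to rule out for all $i$, not just $i=0$), but its treatment of the case $m = 6k - 3i + 1$ is quicker than yours---since $3m = 18k - 9i + 3 = a + 2b + 3c + 3d$, one immediately gets $2a + b = 0$ and hence $a = b = 0$, with no need for the parity/squeeze argument.
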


\begin{proof}
Examining coordinates, we obtain
\begin{align}
\label{eq:c1}
36k^2 - 18ik + 24k - \phantom{1}9i +3 &= 6km + a + 2b + 3c + 3d \qquad \text{and} \\
\label{eq:c2}
36k^2 - 18ik + 36k - 21i + 5 &= 6km + a + 4b + 2c + 5d.
\end{align}
% First note that $6k - 3i + 1 \geq 0$ since $6k \geq 3i$. Similarly for $6k - 3i + 2$.  
If $m \ge 6k - 3i + 4$, then~\eqref{eq:c1} implies
$$0 \ge 6k(6k - 3i + 4 - m) = a + 2b + 3c + 3d + 9i - 3 \ge m + 9i - 3 \ge 6k + 6i.$$
which is a contradiction since $k$ is positive.  On the other hand, if $m \le 6k - 3i$, then
$$24k \le 6k(6k - 3i + 4 - m) = a + 2b + 3c + 3d + 9i - 3 \le 3m + 9i - 3 \le 18k + 6,$$
which is a contradiction since $k \ge 2$.  This leaves 3 possible values for $m$.  

First, if $m = 6k - 3i + 3$, then~\eqref{eq:c1} implies
$$-6i = (6k - 9i + 3) - (6k - 3i + 3) = (a + 2b + 3c + 3d) - (a + b + c + d) = b + 2c + 2d,$$
which forces $i = 0$ and $b = c = d = 0$, but this is impossible since $R_0$ is not a multiple of $N$ (the second coordinate is strictly larger than the first).  
Next, supposing $m = 6k - 3i + 1$, from~\eqref{eq:c1} we obtain 
$$0 = 3m - (18k - 9i + 3) = 3m - (a + 2b + 3c + 3d) = 2a + b$$
which implies $a = b = 0$.  Lastly, suppose $m = 6k - 3i + 2$. 
Using~\eqref{eq:c1} and~\eqref{eq:c2},
\begin{align*}
0 &= 3(12k - 9i + 3) - (24k - 21i + 5) - 2(6k - 3i + 2) \\
&= 3(a + 2b + 3c + 3d) - (a + 4b + 2c + 5d) - 2(a + b + c + d) \\
&= 5c + 2d,
\end{align*}
which implies that $c = d = 0$.   
\end{proof}

\begin{proof}[Proof of Theorem~\ref{t:4genlinear}]
After verifying that 
$$(0, 5, 2k + 2, k - 4), \, (3k + 4, 0, 0, 0) \in \mathsf Z(B)$$
and
$$(3i + 1, 6k - 6i + 1, 0,0), \, (0, 0, 2i, 6k - 5i + 1) \in \mathsf Z(R_i),$$
the result follows from Lemmas~\ref{l:4gena}, ~\ref{l:4genb}, and~\ref{l:4genc}.  
\end{proof}

\begin{remark}\label{r:bettiequal}
Based on computational evidence, $\Betti(M_n) = \{A, B, R_0, \ldots, R_k\}$.  However, proving this requires a substantially longer argument that would take us too far astray from the intent of Theorem~\ref{t:4genlinear}.  
\end{remark}

%%%%%%%%%%%%%%%%%%%%%%%%%%%%%%%%%%%%%%%%%%%%%%%%%%%%%%%%%%%%%%%%%%%%%%%%%
\section{A shifted family with a periodic number of minimal relations}%%%
\label{sec:4genperiodic}%%%%%%%%%%%%%%%%%%%%%%%%%%%%%%%%%%%%%%%%%%%%%%%%%
%raggedbottom%%%%%%%%%%%%%%%%%%%%%%%%%%%%%%%%%%%%%%%%%%%%%%%%%%%%%%%%%%%%

In the final section of this paper, we prove that for each $n \ge 3$, the affine semigroup
$$M_n = \<N, N + (3, 2), N + (4, 3), N + (5, 3)\>$$
has either 2 or 3 minimal relations, demonstrating that the phenomenon identified in Theorem~\ref{t:4genlinear} does not occur for all shifted families of 4-generated affine semigroups.  

\begin{lemma}\label{l:4gentrades}
Suppose $n = 3k + r$ for $k, r \in \ZZ$ with $k \ge 1$ and $0 \le r < 3$.  
The smallest positive values of $c_1$ and $c_2$ for which 
\begin{align*}
c_1(N + (3,2)) &\in \<N, N + (4, 3), N + (5, 3)\> \quad \text{and} \\
c_2(N + (4,3)) &\in \<N, N + (3, 2), N + (5, 3)\>
\end{align*}
are $c_1 = 3$ and $c_2 = 2k + r$, respectively.  
\end{lemma}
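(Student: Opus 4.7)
The plan is to translate each membership condition into a system of two Diophantine equations, one per coordinate in $\ZZ_{\ge 0}^2$, and then exploit a simple but powerful manipulation: subtracting the second-coordinate equation from the first kills every $n$-term and yields a tight single-variable identity of the form $c_i = a_{i'} + 2a_3$. The first-coordinate equation alone then rearranges into a second identity linking $c_i$, $n$, and a signed combination of the remaining factorization coefficients. A short case analysis on the sign of that combination closes each bound.

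Concretely, for the $c_1$ statement, suppose
$$c_1(N + (3,2)) = a_0 N + a_2(N + (4,3)) + a_3(N + (5,3))$$
with $a_0, a_2, a_3 \in \ZZ_{\ge 0}$. Subtracting coordinates gives $c_1 = a_2 + 2a_3$, and substituting back into the first-coordinate equation (using $m = a_0 + a_2 + a_3$) rearranges to $(a_3 - a_0)\, n = a_2 - a_3$. Setting $t = a_3 - a_0$: if $t = 0$, then $a_0 = a_2 = a_3$ and $c_1 = 3a_3$, with minimum $c_1 = 3$ realized at $(a_0, a_2, a_3) = (1,1,1)$; if $t \ge 1$, the inequality $a_3 \ge t$ (forced by $a_0 \ge 0$) yields $c_1 = 3a_3 + tn \ge t(n+3) \ge 6$; if $t \le -1$, non-negativity of $a_2 = a_3 + tn$ forces $a_3 \ge n$, hence $c_1 \ge 2n \ge 6$. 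For the $c_2$ bound, the same manipulation applied to
$$c_2(N + (4,3)) = a_0 N + a_1(N + (3,2)) + a_3(N + (5,3))$$
gives $c_2 = a_1 + 2a_3$ and $(a_0 - a_3)\, n = a_1 + 3a_3$. Non-negativity of the right-hand side forces $s := a_0 - a_3 \ge 0$, and $s = 0$ collapses to the trivial $c_2 = 0$, so $s \ge 1$. Rewriting $c_2 = sn - a_3$ subject to $a_3 \le sn/3$ (from $a_1 \ge 0$) yields $c_2 \ge \lceil 2sn/3 \rceil$, minimized at $s = 1$, and a direct computation on the three residue classes $r \in \{0,1,2\}$ gives $\lceil 2n/3 \rceil = 2k + r$, realized by the explicit witness $(a_0, a_1, a_3) = (k+1, r, k)$.

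The main thing to watch is ensuring \emph{minimality}, rather than just existence, in every branch of the case analysis: each of the cases $t = 0$, $t \ge 1$, $t \le -1$ (and analogously $s = 1$ versus $s \ge 2$) requires a separate non-negativity argument to rule out smaller $c_i$. The one mildly delicate point is the $r$-dependence of $\lceil 2n/3 \rceil$ in the $c_2$ bound, which is handled by inspecting the three residue classes individually to extract the closed form $2k + r$; the rest of the argument is elementary once the correct coordinate combination has been identified.
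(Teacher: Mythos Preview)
Your argument is correct, and its opening move coincides with the paper's: subtract the second-coordinate equation from the first to obtain $c_1 = a_2 + 2a_3$ (respectively $c_2 = a_1 + 2a_3$), and then substitute back to get a single relation of the form $n\cdot(\text{difference of coefficients}) = (\text{linear combination})$. The divergence is in how the minimality is finished. For $c_1$, the paper first exhibits the trade $\pi_n(1,0,1,1)=\pi_n(0,3,0,0)$ to get $c_1\le 3$, then uses $c_1=a_2+2a_3\le 3$ to force $a_3\in\{0,1\}$ and dispatches both values; you instead split on the sign of $t=a_3-a_0$ and bound each branch from below, never invoking the upper bound until the end. For $c_2$, the paper's argument is not self-contained: it first uses the $c_1=3$ trade to reduce to $b_1<3$, then combines the upper bound $c_2\le 2k+r$ with a mod-$3$ computation to pin down $b_1=r$ and $b_0-b_3=1$. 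Your route avoids both the reduction and the modular step, going straight from $c_2=sn-a_3$ and $a_3\le sn/3$ to $c_2\ge\lceil 2sn/3\rceil\ge\lceil 2n/3\rceil=2k+r$. The paper's approach buys a slightly more concrete picture of the extremal factorization along the way, while yours is shorter and decouples the $c_2$ bound from the $c_1$ result.
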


\begin{proof}
One can readily check that
$$\pi_n(1, 0, 1, 1) = \pi_n(0, 3, 0, 0) \quad \text{and} \quad \pi_n(k+1, r, 0, k) = \pi_n(0, 0, 2k + r, 0),$$
so $c_1 \le 3$ and $c_2 \le 2k + r$.  
Suppose $(a_0, 0, a_2, a_3) \in \mathsf Z(c_1(N + (3,2)))$.  This yields
\begin{align*}
0 &= n(a_0 - c_1 + a_2 + a_3) - 3c_1 + 4a_2 + 5a_3 \\
0 &= n(a_0 - c_1 + a_2 + a_3) - 2c_1 + 3a_2 + 3a_3
\end{align*}
from which we have $c_1 - a_2 - 2a_3 = 0$, and substituting yields
% $$0 = n(a_0 - a_3) + a_2 - a_3.$$
$$0 = n(a_0 - a_3) + c_1 - 3a_3.$$
Since $c_1 \le 3$, either $a_3 = 0$, which is impossible since it would imply $na_0 + c_1 = 0$, or $a_3 = 1$, in which case $c_1 = 3 - n(a_0 - 1)$ forces $c_1 = 3$ so long as $n \ge 3$.  

Next, suppose $(b_0, b_1, 0, b_3) \in \mathsf Z(c_2(N + (4,3)))$.  Due to the above trade and the minimality of $c_2$, we can assume $b_1 < 3$.  This yields
\begin{align*}
0 &= n(b_0 + b_1 - c_2 + b_3) - 4c_2 + 3b_1 + 5b_3 \\
0 &= n(b_0 + b_1 - c_2 + b_3) - 3c_2 + 2b_1 + 3b_3
\end{align*}
from which we have $c_2 - b_1 - 2b_3 = 0$.  
First, substituting for $c_2$ yields
$$n(b_0 - b_3) = c_1 + b_3,$$
whose positivity implies $b_0 - b_3 \ge 1$.  Moreover, 
$$n(b_0 - b_3) = 4c_2 - 3b_1 - 5b_3 = \tfrac{3}{2}c_2 - \tfrac{1}{2}b_1 \le \tfrac{3}{2}c_2 \le 3k + \tfrac{3}{2}r \le 3(k+1)$$
which forces $b_0 - b_3 = 1$.  From there, reducing both sides of
$$0 = n(b_0 - b_3) - b_1 - 3b_3 = 3k + r - b_1 - 3b_3$$
modulo $3$ implies $b_1 \equiv r \bmod 3$ and thus $b_1 = r$.  Together with
$$2b_3 = c_2 - b_1 \le 2k + r - b_1 = 2k,$$
we obtain
$$c_2 = n(b_0 - b_3) - b_3 = n - b_3 = 3k + r - b_3 \ge 2k + r,$$
as desired.  
\end{proof}

\begin{thm}\label{t:4genperiodic}
Write $n = 3k + r$ for $k, n \in \ZZ_{\ge 0}$ with $r < 3$, and let 
$$R = \big\{ \! \big( (1, 0, 1, 1), (0, 3, 0, 0) \big), \big( (k + 1, r, 0, k), (0, 0, 2k+r, 0) \big) \! \big\}.$$
If $n \ge 3$, then 
$$\rho = \begin{cases}
R & \text{if } r = 0; \\
R \cup \big\{ \! \big( (k + 2, 0, 0, k+1), (0, 3-r, 2k+r-1, 0) \big) \! \big\} & \text{if } r = 1, 2,
\end{cases}$$
is a minimal presentation of $M_n$.  
\end{thm}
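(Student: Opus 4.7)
The plan is to verify that each listed pair is a relation in $\ker \pi_n$ (a routine coordinate check using $n = 3k+r$), then to prove $\rho$ is a minimal presentation by identifying the Betti elements of $M_n$ and showing each has factorization graph with exactly two connected components.

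The key setup parametrizes factorizations of any $v = (V_1, V_2) \in M_n$. Subtracting the two coordinate equations forces $a_1 + a_2 + 2a_3 = \delta := V_1 - V_2$, so every factorization $(a_0, a_1, a_2, a_3)$ of $v$ is determined by $m = |a|$ and $a_3$ via
$$a_0 = m - \delta + a_3, \qquad a_1 = \delta - \gamma_m - 3a_3, \qquad a_2 = a_3 + \gamma_m,$$
where $\gamma_m = V_1 - mn - 3\delta$. In this parametrization, the three listed trades correspond to the lattice moves $(m, a_3) \mapsto (m, a_3+1)$, $(m, a_3) \mapsto (m+1, a_3+k)$, and $(m, a_3) \mapsto (m-1, a_3-(k+1))$, each applicable within the feasibility window defined by $a_i \ge 0$. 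A direct enumeration in this parametrization shows that each claimed Betti element $v_1 = 3(N+(3,2))$, $v_2 = (2k+r)(N+(4,3))$, and (for $r \in \{1,2\}$) $v_3 = (k+2)N + (k+1)(N+(5,3))$ has only the two listed factorizations, which have disjoint supports; hence $\nabla_{v_i}$ has two components and contributes exactly one relation.

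It remains to show that no other $v \in M_n$ is a Betti element. The argument is that for any factorization not among the six Betti factorizations, at least one trade is applicable and its starting and ending factorizations share a common positive coordinate, yielding an edge in $\nabla_v$; hence any two factorizations of such a $v$ are connected by a chain of trades, each step being an edge in $\nabla_v$. A coordinate-level check confirms that the ``no common positive coordinate'' condition on trade endpoints singles out precisely the pairs $\{(0,3,0,0), (1,0,1,1)\}$, $\{(0,0,2k+r,0), (k+1,r,0,k)\}$, and (only for $r \in \{1,2\}$) $\{(k+2,0,0,k+1), (0,3-r,2k+r-1,0)\}$, i.e., the factorizations of $v_1, v_2, v_3$ respectively.

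The main obstacle is establishing that this chain-of-trades argument is traversable---one needs to verify that for any $v$ and any two of its factorizations $(m, a_3)$ and $(m', a_3')$, one can navigate the feasibility region via the three moves without leaving it. The necessity of $T_3$ for $r \in \{1, 2\}$ then emerges from the observation that the factorization $(k+2, 0, 0, k+1)$ has $a_1 = a_2 = 0$, so neither $T_1$ nor $T_2$ (in either direction) is applicable---$T_1$ forward requires $a_1 \ge 3$, $T_2$ forward requires $a_2 \ge 2k+r$, and $T_2$ backward requires $a_1 \ge r \ge 1$---forcing $T_3$ to belong to any minimal presentation. In contrast, for $r = 0$, this same point admits $T_2$ backward (since the condition $a_1 \ge 0$ is vacuous), and a parallel check of the remaining boundary configurations shows $R$ alone suffices.
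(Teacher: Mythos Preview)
Your approach via Betti elements and factorization-graph connectivity is conceptually sound and genuinely different from the paper's proof, which instead takes an arbitrary relation $(a,b)$, reduces it modulo the trades in $R$ until $a_1,b_1<3$ and $a_2,b_2<2k+r$, and then analyzes the coordinate equations directly to show the reduced relation is either trivial or, when $r\ne 0$, exactly the third listed trade. That route never needs to enumerate factorization sets or argue about connectivity of~$\nabla_v$.

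However, your proposal has a real gap at the point you yourself flag as ``the main obstacle.'' The sentence ``for any factorization not among the six Betti factorizations, at least one trade is applicable and its starting and ending factorizations share a common positive coordinate, yielding an edge in $\nabla_v$; hence any two factorizations of such a $v$ are connected'' is a non-sequitur: knowing that every vertex of $\nabla_v$ has degree at least one does not imply $\nabla_v$ is connected. What you actually need is that for any two factorizations of $v$ there is a path in the $(m,a_3)$-lattice, using only the moves $(0,\pm 1)$, $(\pm 1,\pm k)$, $(\pm 1,\pm(k+1))$, that stays inside the feasibility region and at each step leaves a strictly positive residual (so that the step is an edge of $\nabla_v$). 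You describe this requirement precisely but do not carry it out; ``one needs to verify'' is not a verification, and the feasibility region for general $v$ is a polygon whose boundary behavior is exactly where the argument has content.

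Two smaller points compound this. First, your assertion that each candidate Betti element has exactly two factorizations is not checked; for $v_3$ in particular your own parametrization shows that when $r=0$ the point $(m,a_3)=(2k+2,1)$ gives a third factorization $(1,0,2k,1)$, so $v_3$ is \emph{not} a Betti element in that case (consistent with the theorem, but contrary to the uniform ``six Betti factorizations'' language). Second, the phrase ``a parallel check of the remaining boundary configurations shows $R$ alone suffices'' for $r=0$ is again a promise rather than a proof. To complete your approach you would need to carry out the feasibility-region navigation argument in full, or else abandon the $\nabla_v$ framing and argue directly, as the paper does, that any relation reduces modulo $R$ to one of a short explicit list.
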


\begin{proof}
% $$M_n = \<N, N + (3, 2), N + (4, 3), N + (5, 3)\>.$$
The two relations in $R$ indeed appear in $\rho$ by Lemma~\ref{l:4gentrades}.  Suppose $(a, b)$ is some relation not generated by $R$.  By first performing the trades in $R$ on $a$ and $b$, it suffices to assume that $a_1, b_1 < 3$ and that $a_2, b_2 < 2k + r$.  There are two cases to consider.  

In the first case, suppose 
$$(a, b) = \big( (a_0, 0, a_2, 0), \, (0, b_1, 0, b_3) \big)$$
This yields the equations
\begin{align*}
0 &= n(a_0 + a_2 - b_1 - b_3) + 4a_2 - 3b_1 - 5b_3 \\
0 &= n(a_0 + a_2 - b_1 - b_3) + 3a_2 - 2b_1 - 3b_3
\end{align*}
from which we obtain $a_2 = b_1 + 2b_3$, yielding
$$0 = n(a_0 + b_3) + b_1 + 3b_3$$
which is impossible since the right hand side is strictly positive.  

For the second case, suppose
$$(a, b) = \big( (0, a_1, a_2, 0), \, (b_0, 0, 0, b_3) \big).$$
This yields the equations
\begin{align*}
0 &= n(a_1 + a_2 - b_0 - b_3) + 3a_1 + 4a_2 - 5b_3 \\
0 &= n(a_1 + a_2 - b_0 - b_3) + 2a_1 + 3a_2 - 3b_3
\end{align*}
from which we obtain $a_1 + a_2 = 2b_3$.  Substituting yields
$$n(b_0 - b_3) = 3a_1 + 4a_2 - 5b_3 = \tfrac{1}{2}a_1 + \tfrac{3}{2}a_2 \le 1 + 3k + \tfrac{3}{2}r \le 3k + 4,$$
which, together with the positivity of $\tfrac{1}{2}a_1 + \tfrac{3}{2}a_2$ implies $b_0 - b_3 = 1$.  Reducing 
$$0 = n(a_1 + a_2 - b_0 - b_3) + 2a_1 + 3a_2 - 3b_3 = n + 2a_1 + 3a_2 - 3b_3$$
modulo $3$ implies $2a_1 \equiv r \bmod 3$.  If $r = 0$, then $a_1 = 0$, meaning $(a, b)$ has the form 
$(a, b) = \big( (0, 0, a_2, 0), \, (b_0, 0, 0, b_3) \big)$,
contradicting the minimality of $c_2$ in Lemma~\ref{l:4gentrades}.  
If,~on the other hand, $r \ne 0$, then $a_1 = 3 - r$.   Substituting one last time into the original equalities, we obtain
$$3k + r = n(b_0 + b_3 - a_1 - a_2) = a_1 + 2a_2 - b_3 = (3 - r) + 2a_2 - b_3$$
which, when combined with
$$2b_3 = a_1 + a_2 = 3 - r + a_2,$$
yields $a_2 = 2k + r - 1$ and $b_3 = k + 1$.  As such, $(a, b)$ is the third claimed relation.  
\end{proof}

% %%%%%%%%%%%%%%%%%%%%%%%%%%%%%%%%%%%%%%%%%%%%%%%%%%%%%%%%%%%%%%%%%%%%%%%%%
% \section*{Acknowledgements}%%%%%%%%%%%%%%%%%%%%%%%%%%%%%%%%%%%%%%%%%%%%%%
% %raggedbottom%%%%%%%%%%%%%%%%%%%%%%%%%%%%%%%%%%%%%%%%%%%%%%%%%%%%%%%%%%%%

% The authors would like to thank Vadim Ponomarenko for several helpful conversations.  

%%%%%%%%%%%%%%%%%%%%%%%%%%%%%%%%%%%%%%%%%%%%%%%%%%%%%%%%%%%%%%%%%%%%%%%%%
%%%%%%%%%%%%%%%%%%%%%%%%%%%%%%%%%%%%%%%%%%%%%%%%%%%%
%%%%%%%%%%%%%%%%%%%%%%%%%%%%%%%%%%%%%%%%%%%%%%%%%%%%%%%%%%%%%%%%%%%%%%%%%

%%%%%%%%%%%%%%%%%%%%%%%%%%%%%%%%%%%%%%%%%%%%%%%%%%%%%%%%%%%%%%%%%%%%%%%%%

\begin{thebibliography}{HHHKR10}%%%%%%%%%%%%%%%%%%%%%%%%%%%%%%%%%%%%%%%%%
\raggedbottom%%%%%%%%%%%%%%%%%%%%%%%%%%%%%%%%%%%%%%%%%%%%%%%%%%%%%%%%%%%%

\bibitem{presburgerarith}
T.~Bogart, J.~Goodrick, and K.~Woods, 
\emph{Parametric Presburger arithmetic:\ logic, combinatorics, and quasi-polynomial behavior}, 
Discrete Analysis 4 (2017), 34~pp.

\bibitem{parametricpresburgerbigthm}
T.~Bogart, J.~Goodrick, and K.~Woods,
\emph{Periodic behavior in families of numerical and affine semigroups via parametric Presburger arithmetic},
preprint.  
Available at \textsf{arXiv:math.CO/1911.09136}.

\bibitem{shiftydelta}
S.~Chapman, N.~Kaplan, T.~Lemburg, A.~Niles, and C.~Zlogar, 
\emph{Shifts of generators and delta sets of numerical monoids}, 
Internat.~J.~Algebra Comput.~24 (2014), no.~5, 655--669.

\bibitem{mcnuggetmag}
S.~Chapman and C.~O'Neill, 
\emph{Factoring in the Chicken McNugget monoid}, 
Mathematics Magazine \textbf{91} (2018), no.~5, 323--336.

\bibitem{shiftyminpres}
R.~Conaway, F.~Gotti, J.~Horton, C.~O'Neill, R.~Pelayo, M.~Williams, and B.~Wissman,
\emph{Minimal presentations of shifted numerical monoids},
International Journal of Algebra and Computation \textbf{28} (2018), no.~1, 53--68.  
% Available at \textsf{arXiv:math.AC/1701.08555}.

\bibitem{numericalsgpsgap}
M.~Delgado, P.~Garc\'ia-S\'anchez, and J.~Morais, 
\emph{NumericalSgps, A package for numerical semigroups}, 
Version 1.1.10 (2018), (Refereed GAP package),
\url{https://gap-packages.github.io/numericalsgps/}.

\bibitem{completeintersection}
P.~Garc\'ia-S\'anchez and J.~Rosales, 
\emph{On complete intersection affine semigroups}, 
Comm.\ Algebra \textbf{23} (1995), no.~14, 5395--5412.

\bibitem{shiftedtangentcone}
J.~Herzog and D.~Stamate,
\emph{On the defining equations of the tangent cone of a numerical semigroup ring}
J.~Algebra 418 (2014), 8--28. 

\bibitem{kerstetter}
F.~Kerstetter and C.~O'Neill, 
\emph{On parametrized families of numerical semigroups},
preprint.  Available at \textsf{arXiv:math.AC/1909.04281}

\bibitem{shiftyapery}
C.~O'Neill and R.~Pelayo,
\emph{Ap\'ery sets of shifted numerical monoids},
Advances in Applied Mathematics \textbf{97} (2018), 27--35.  

\bibitem{fingenmon}
J.~Rosales and P.~Garc\'ia-S\'anchez, 
\emph{Finitely generated commutative monoids}, 
Nova Science Publishers, Inc., Commack, NY, 1999. xiv+185 pp.~ISBN: 1-56072-670-9.  

\bibitem{numerical}
J.~Rosales and P.~Garc\'ia-S\'anchez, 
\emph{Numerical {S}emigroups}, 
Developments in Mathematics, Vol. 20, Springer-Verlag, New York, 2009.

\bibitem{shifted3gen}
D.~Stamate, 
\emph{Asymptotic properties in the shifted family of a numerical semigroup with few generators},
Semigroup Forum 93 (2016), no.~2, 225--246. 

\bibitem{vu14}
T.~Vu, 
\emph{Periodicity of Betti numbers of monomial curves},
Journal of Algebra \textbf{418} (2014), 66--90.


%%%%%%%%%%%%%%%%%%%%%%%%%%%%%%%%%%%%%%%%%%%%%%%%%%%%%%%%%%%%%%%%%%%%%%%%%
\end{thebibliography}
\end{document}